\newtheorem{theorem}{Theorem}
\newtheorem*{theorem*}{Theorem}
\newtheorem{lemma}{Lemma}
\theoremstyle{definition}
\newtheorem*{remark*}{Remark}
\newcommand{\eqdef}{\stackrel{\scriptscriptstyle\rm def}{=}}
\DeclareMathOperator{\diam}{diam}
\DeclareMathOperator{\Jac}{Jac}
\DeclareMathOperator{\supp}{supp}
\DeclareMathOperator{\card}{card}
\def\bZ{\mathbb{Z}}
\def\bR{\mathbb{R}}
\def\cC{\EuScript{C}}
\def\cP{\EuScript{P}}
\def\cR{\mathscr{R}}
\def\cW{\EuScript{W}}
\def\cM{\EuScript{M}}
\DeclareMathSymbol{\varnothing}{\mathord}{AMSb}{"3F}
\author{} \address{}
\email{}
\urladdr{}
\author{Katrin Gelfert} \address{Instituto de Matem\'a‡tica, Universidade Federal do Rio de Janeiro, Cidade Universit\'aria -- Ilha do Fund\~ao, 21945-909 Rio de Janeiro, RJ, Brazil}
\email{gelfert@im.ufrj.br}
\begin{document}

\title[Horseshoes]{Horseshoes  for diffeomorphisms\\ preserving hyperbolic measures}

\begin{abstract}
	We give extensions of Katok's horseshoe constructions, comment on related results, and provide a self-contained proof. We consider either a $C^{1+\alpha}$ diffeomorphism preserving a hyperbolic measure or a $C^1$ diffeomorphism preserving a hyperbolic measure whose support admits a dominated splitting.
\end{abstract}

\thanks{KG was supported by CNPq (Brazil). She is grateful for discussions with Keith Burns, Yang Jiagang, Rafael Potrie, and {Micha\l} Rams. She also thanks the referee for correcting the former proof of Theorem~\ref{t:2} suggesting reference~\cite{Wal:81}.}

\keywords{Pesin theory, non-uniformly hyperbolic dynamics for $C^{1+\alpha}$ and $C^1$ diffeomorphisms, horseshoes, pressure}
\subjclass[2000]{ %
37E05, 
37D25, 
37D35, 
37C45, 
28D99
}
\maketitle
\section{Introduction}

Given a diffeomorphism $f$ of a smooth Riemannian manifold $M$, we call a set $\Gamma\subset M$  \emph{basic} (with respect to $f$) if it is compact and with respect to $f$ invariant,  locally maximal, and hyperbolic.  Let  $\cM(f|_\Gamma)$ denote the space of $f$-invariant Borel probability measure on $\Gamma$. We call an ergodic $f$-invariant Borel probability  measure \emph{hyperbolic} if it has positive and negative but no zero Lyapunov exponents. In this paper we consider in parallel two assumptions:
 \begin{itemize}
	\item[(A1)]  $f$ is $C^{1+\alpha}$ diffeomorphism preserving a hyperbolic ergodic measure $\mu$ with positive entropy, or that\\[-0.3cm]
	\item[(A2)] $f$ is a $C^1$ diffeomorphism preserving a hyperbolic ergodic measure $\mu$ with positive entropy  whose support admits a dominated splitting.\\[-0.3cm]
\end{itemize}

\begin{theorem}\label{t:1}
	 Let $f$ be a $C^1$ diffeomorphism of a smooth Riemannian manifold $M$ and $\mu$ a hyperbolic ergodic $f$-invariant Borel probability measure with positive entropy $h_\mu(f)>0$. 
	 Denote $\chi(\mu)=\min_j\lvert\chi_j(\mu)\rvert$, where $\chi_j(\mu)$ denote the Lyapunov exponents of $\mu$. 	 
	 Assume  either hypothesis (A1) or hypothesis (A2).
	 
	 Then for every $e\in(0,h_\mu(f)]$ and every sufficiently small $r>0$ there exists a set $\Gamma\subset M$ and a positive integer $m$ so that we have
\begin{itemize}
		\item[i)] $\Gamma$  is basic and topologically mixing (with respect to $f^m$),	
		\item[ii)] $\Gamma$ is contained in the $r$-neighborhood of the  support of $\mu$.
		\item[iii)] $\displaystyle d(\nu,\mu)<r$ for every $\nu\in\cM(\Gamma)$, where $d$ is a metric that generates the weak$\ast$ topology,
		\item[iv)] $\displaystyle 
			\big\lvert h(f|_\Gamma) - e\big\rvert<r$,
		\item[v)] For every continuous function $\varphi\colon M\to\bR$ the topological pressure of $\varphi$ (with respect to $f|_{\widehat\Gamma}$, where $\widehat\Gamma=\Gamma\cup f(\Gamma)\cup\ldots\cup f^{m-1}(\Gamma)$) satisfies 
			\[
			\Big\lvert P(\varphi,f|_{\widehat\Gamma})
			- \big( e +\int\varphi\,d\mu \big)
				\Big\rvert <r\,,
			\]	
		\item[vi)] for every Lyapunov regular point $x\in\Gamma$  we have
			\[
				\Big\lvert \frac1m\log\,\lvert \Jac df^m_{|E_x^u}\rvert -
				 	\int \log\,\lvert  \Jac df_{|E^u}\rvert \,d\mu\Big\rvert<r,
			\]	
	\end{itemize}
\end{theorem}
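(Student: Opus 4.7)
The plan is to follow Katok's strategy: produce a Pesin block where hyperbolic estimates are uniform, harvest exponentially many orbit segments from generic points using Shannon--McMillan--Breiman together with the Brin--Katok entropy formula, and then weave them into a full shift using a shadowing/graph-transform argument. In case (A1) the standard tool is Pesin's stable manifold theorem together with H\"older regularity of the invariant splitting; in case (A2) the dominated splitting provides uniform angle bounds and uniform continuity of the splitting on $\supp\mu$, so one substitutes Crovisier--Sambarino style $C^1$ estimates (cone fields, Pliss--Ma\~n\'e type selection of hyperbolic times) for Pesin's machinery to obtain locally invariant graphs.

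More concretely, I would fix small $\varepsilon,\delta>0$ and a Pesin (or Pliss) block $\Lambda\subset\supp\mu$ of measure $\mu(\Lambda)>1-\varepsilon$ on which local stable and unstable manifolds have uniform size and transverse angles, and on which $\frac1n\log\lvert\Jac df^n_{|E^u}\rvert$, $\frac1n\sum_{k=0}^{n-1}\delta_{f^k x}$ converge uniformly to their $\mu$-averages on a further subset of large measure $\Lambda_0$ of Birkhoff-generic points. For $n$ large I would cover $\Lambda_0$ by finitely many small boxes; by Shannon--McMillan--Breiman (or Brin--Katok) one of these boxes $Q$ contains a set of $(n,\delta)$-separated points of cardinality at least $N\ge e^{n(e-\varepsilon)}$ whose orbits stay in an $r/2$-neighborhood of $\supp\mu$, have Birkhoff sums $\varepsilon$-close to the $\mu$-averages, and whose endpoints return to $Q$. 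Taking $m=n$, a stable/unstable manifold shadowing argument on $Q$ converts these $N$ segments into a geometric Markov rectangle: the local stable and unstable leaves of the endpoints cross $Q$ in a Smale horseshoe pattern, producing a basic set $\Gamma$ with $f^m|_\Gamma$ conjugate to the full shift on $N$ symbols, whence (i) and $h(f^m|_\Gamma)=\log N\in [n(e-\varepsilon),n(e+\varepsilon)]$, giving (iv). Mixing of $f^m|_\Gamma$ is automatic for the full shift; transitivity of $f|_{\widehat\Gamma}$ is built in by cycling through $f(\Gamma),\ldots,f^{m-1}(\Gamma)$.

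Items (ii), (iii), (v), (vi) then follow from the choice of $Q$ and of the generic orbit segments. Every $\nu\in\cM(\Gamma)$ is $f^m$-invariant, supported on orbits that shadow the chosen Birkhoff-generic segments, so its empirical averages against any fixed finite family of continuous test functions are within $O(\varepsilon)$ of those of $\mu$, giving (iii); (ii) is immediate from the neighborhood control on the segments. For (v) the variational principle yields $P(\varphi,f|_{\widehat\Gamma})=\sup_\nu(h_\nu(f|_{\widehat\Gamma})+\int\varphi\,d\nu)$, and since $h(f|_{\widehat\Gamma})$ is $\varepsilon$-close to $e$ while $\int\varphi\,d\nu$ is $\varepsilon'$-close to $\int\varphi\,d\mu$ for all $\nu$, the bound follows by choosing $\varepsilon,\varepsilon'$ small depending on a uniform modulus of continuity of $\varphi$ on $\overline{B_r(\supp\mu)}$. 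For (vi) the log-Jacobian $\log\lvert\Jac df_{|E^u}\rvert$ is continuous on the Pesin/Pliss block (using H\"older continuity of $E^u$ in case (A1), uniform continuity from the dominated splitting in case (A2)), so its Birkhoff sums along any orbit of $\Gamma$ are close to $\int\log\lvert\Jac df_{|E^u}\rvert\,d\mu$.

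The main obstacle is the shadowing/Markov step under hypothesis (A2): without $C^{1+\alpha}$ regularity one cannot invoke Pesin's stable manifold theorem directly. Here I would rely on the fact that a dominated splitting over $\supp\mu$ together with the existence of infinitely many hyperbolic times for $\mu$-a.e.\ point gives uniformly sized local invariant graphs at each hyperbolic time (as in Ma\~n\'e, Pujals--Sambarino, Crovisier); the graph-transform argument then produces genuinely transverse local stable and unstable leaves at the endpoints in $Q$, and the horseshoe is constructed as in the classical case. Verifying the Markov property and mixing after this $C^1$ substitute, while keeping track of all estimates uniformly in $r$, is where the care is needed.
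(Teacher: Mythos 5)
Your overall strategy matches the paper's: build a Pesin/Pliss block with uniform hyperbolic estimates, use Brin--Katok to harvest exponentially many $(n,\varepsilon)$-separated points in a small box, upgrade them to disjoint admissible (sub-)rectangles by a graph-transform argument, and read off the full shift. The replacement of Pesin theory by fake foliations/cone fields under (A2) is also what the paper does.

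The genuine gap is in the phrase ``whose endpoints return to $Q$. Taking $m=n\ldots$''. To get a full shift you need a \emph{single} return time $m$ shared by a still-exponentially-large subset of the separated points, and there is no reason for Brin--Katok-generic points to return to a fixed small box $Q$ at time exactly $n$: the probability of returning to $Q$ at a \emph{specified} time is only $\approx\mu(Q)$, which kills the exponential count. The paper handles this by (i) a uniform recurrence lemma (Lemma~\ref{lem:bur}, a Birkhoff + Egorov argument applied to the indicator of a partition atom intersected with the Pesin block) guaranteeing every good point has \emph{some} return time in the window $\{n,\ldots,n+rn\}$, and (ii) a double pigeonhole, first over the $rn$ candidate return times to fix $m$ and then over the $\ell$ rectangles to fix $R(x_i)$, losing only the sub-exponential factor $\ell\cdot rn$, which is absorbed by the $\pm r$ error in the entropy estimate. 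Relatedly, once $m$ is fixed one must verify that distinct separated points in $F_m\cap\cP(x_i)$ give \emph{disjoint} admissible $L^s$-rectangles; the paper derives this from the contraction estimate (item 4 of Lemma~\ref{lllemma}), which forces $d(f^k x,f^k y)<\varepsilon$ along the whole block if $y$ shares the rectangle of $x$, contradicting $(n,\varepsilon)$-separation. Your sketch asserts the horseshoe pattern but does not close either of these points. The rest of your argument --- mixing of the full shift, the variational principle for (v), modulus-of-continuity control for (iii) and (vi) --- agrees with the paper.
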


Results of this type are widely referred to Katok~\cite{Kat:80} or Katok and Mendoza~\cite[Supplement S.5]{KatHas:95} (see also~\cite{BarPes:06,BarPes:07}). 

Let us mention some extensions to other settings. An earlier related statement for continuous and piecewise monotone maps  of the interval goes back to Misiurewicz and Szlenk~\cite{MisSzl:80}. 
Corresponding properties of Theorem~\ref{t:1} in the case of a measure with only positive Lyapunov exponent of holomorphic maps were shown by Przytycki and Urba\'nski~\cite{PrzUrb:10}. The case of general $C^{1+\alpha}$ maps is discussed by Chung~\cite{Chu:99} and Yang~\cite{Yan:} (see also Buzzi~\cite{Buz:} for a sketch) and of maps with singular or critical points by Gelfert~\cite{Gel:10}. A related result  is established in the setting of dyadic diophantine approximations by Persson and Schmeling~\cite{Per:06, PerSch:08}. Katok~\cite[Section 4]{Kat:82} gave a version for flows.

Mendoza in his PhD thesis~\cite{Men:83} showed that in the case of a $C^2$ surface diffeomorphism leaving invariant an ergodic hyperbolic SRB measure, there exists a sequence of horseshoes with unstable dimension (the Hausdorff dimension of its intersection with local unstable manifolds) converging to 1 (see~\cite{Men:88}). See also~\cite{Men:85,Men:89} for related work. However, Mendoza's  general arguments that the Hausdorff measure of the approximating horseshoes is uniformly bounded away from zero are not correct~\cite{San:09}.  
S\'anchez-Salas in his PhD thesis~\cite{San:99} continued studying a $C^2$ diffeomorphism of a compact manifold of dimension $\ge2$ leaving invariant an ergodic hyperbolic SRB measure and showed that this measure can be approximated by ergodic measures supported on horseshoes with arbitrarily large unstable dimension~\cite{San:02}. His studies are based on Markov towers that can also be described by horseshoes with infinitely many branches and variable return times, this enables uniform bounds on the distortion. In~\cite{San:03} he also establishes a version for interval maps using Markov towers with good fractal properties. In~\cite{LuzSan:11} Luzzatto and S\'anchez-Salas study ``variable-time horseshoes'' in order to achieve weak$\ast$ convergence of all measures on the horseshoes towards the hyperbolic one (compare property iii) in the above theorem).

In the $C^1$ setting Pesin-Katok theory does not work in general (e.g.~\cite{BonCroKat:13} provide examples of $C^1$ diffeomorphisms and hyperbolic measures without stable/unstable manifolds). Gan~\cite{Gan:02} discusses extensions to $C^1$ surface diffeomorphisms. He restricts to the case that the set of all preperiodic points of hyperbolic (neither sink nor source) periodic points possesses a dominated splitting. This allows him to apply a result by Pujals and Sambarino~\cite{PujSam:00} stating that corresponding local un-/stable manifolds have uniform size and applying a closing lemma by Liao~\cite{Lia:79}. Detailed construction of invariant manifolds for dominated hyperbolic measures are due to Abdenur, Bonatti, and Crovisier~\cite[Section 8]{AbdBonCro:11}. In the proof of Theorem~\ref{t:1} in the $C^1$ setting we will make use of the method of fake foliations presented in~\cite{BurWil:10}. 

\smallskip
The following result is an almost immediate consequence of Theorem~\ref{t:1}.

\begin{theorem}\label{t:2}
	If $h(f|_M)>0$ then for every $e\in[0,h(f|_M))$ there exists a compact $f$-invariant  and  hyperbolic set $\Gamma\subset M$ such that \[h(f|_\Gamma)=e\,.\]
\end{theorem}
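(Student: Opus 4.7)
The approach is to deduce Theorem~\ref{t:2} from Theorem~\ref{t:1} by a two-step reduction: first produce a basic (uniformly hyperbolic) set $\Gamma_0$ of entropy strictly above $e$, and then locate inside $\Gamma_0$ a compact $f$-invariant subset of entropy exactly $e$.

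For the first step, I would invoke the variational principle together with the standing hypothesis that we are in the setting of Theorem~\ref{t:1}, namely (A1) or (A2), which guarantees that entropies of hyperbolic ergodic measures approach $h(f|_M)$, to select a hyperbolic ergodic $f$-invariant Borel probability measure $\mu$ with $h_\mu(f)>e$. Applying Theorem~\ref{t:1} to $\mu$ with target entropy value $h_\mu(f)$ and tolerance $r<h_\mu(f)-e$ produces a positive integer $m$ and a basic set $\Gamma_0$ satisfying $h(f|_{\Gamma_0})>e$.

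For the second step, I would use that the basic set $\Gamma_0$, via Bowen's topological construction of Markov partitions (valid in the $C^1$ setting as well), is a finite-to-one topological factor of a transitive subshift of finite type $(\Sigma_A,\sigma)$ of the same topological entropy, and that the finite-to-one property ensures that entropies of closed invariant subsets are preserved under the factor map. Then I would invoke the classical fact that a transitive subshift of finite type of entropy $h>0$ contains, for every $e'\in[0,h]$, a closed shift-invariant subset of topological entropy exactly $e'$ (shown, for example, by embedding a $\beta$-shift of entropy $e'$ into $\Sigma_A$, or by a direct word-counting construction). Projecting such a subshift of entropy $e$ through the Markov factor map yields the desired compact $f$-invariant $\Gamma\subset\Gamma_0$ with $h(f|_\Gamma)=e$. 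Since $\Gamma\subset\Gamma_0$ and $\Gamma_0$ is uniformly hyperbolic, $\Gamma$ inherits hyperbolicity.

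The main obstacle is this intermediate value property for topological entropy among closed invariant subsets of a transitive SFT: while standard, it requires an actual construction rather than a soft argument. A secondary point of care is the initial selection of a hyperbolic ergodic measure of entropy above $e$; under (A2) this follows from the dominated splitting combined with Ruelle's inequality, while under (A1) it is a consequence of Katok-type approximation results already invoked in the proof of Theorem~\ref{t:1}. Everything else in the reduction is automatic.
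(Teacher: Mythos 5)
Your proposal is correct, but for the second step it takes a genuinely different route from the paper. The paper applies Theorem~\ref{t:1} \emph{iteratively inside itself}: starting from the basic set $\Gamma_1$ of entropy $e+\zeta_1$, it picks an ergodic measure of $f^{m_1}|_{\Gamma_1}$ with entropy slightly above $e$, applies Theorem~\ref{t:1} again to produce a smaller basic set $\Gamma_2\subset\Gamma_1$ (using local maximality of $\Gamma_1$ to get the inclusion) with entropy in $\big(e,\,e+2^{-2}\zeta_1\big]$, and iterates to obtain a nested sequence $\Gamma_1\supset\Gamma_2\supset\cdots$ with $h(f|_{\Gamma_i})\downarrow e$; then $\Gamma=\bigcap_i\Gamma_i$ works because $K\mapsto h(f|_K)$ is upper semicontinuous on compact invariant subsets of the h-expansive (uniformly hyperbolic) set $\Gamma_1$, which forces $h(f|_\Gamma)=\lim_i h(f|_{\Gamma_i})=e$. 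Your route instead passes to symbolic dynamics once and invokes the interval property for entropies of closed subshifts; this avoids the expansivity/nested-intersection subtlety at the cost of needing that (classical but non-trivial) interval property and the entropy-preservation lemma for finite-to-one factor maps. Two observations that would streamline your write-up: (i) you do not actually need Bowen's Markov partition theory, since the $\Gamma$ produced in the proof of Theorem~\ref{t:1} is stated there to be \emph{topologically conjugate} to a full shift for $f^m$, so you may pull back a subshift of entropy $m e$ directly through the conjugacy; (ii) both your argument and the paper's should keep track of the passage from the $f^m$-invariant set $\Gamma_0$ to its $f$-invariant orbit $\widehat\Gamma_0=\Gamma_0\cup f(\Gamma_0)\cup\cdots\cup f^{m-1}(\Gamma_0)$, with the bookkeeping $h(f|_{\widehat\Gamma_0})=\tfrac1m\,h(f^m|_{\Gamma_0})$. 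Finally, your heuristic that (A1) or (A2) ``guarantees that entropies of hyperbolic ergodic measures approach $h(f|_M)$'' is not literally implied by those hypotheses as stated; the paper's proof simply invokes the variational principle to get an ergodic measure of entropy in $(e,h(f|_M)]$ and feeds it to Theorem~\ref{t:1}, implicitly assuming such a measure can be taken hyperbolic and satisfying (A1)/(A2)---a gap your proposal shares with the paper rather than introduces.
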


Results related to Theorem~\ref{t:2} were also studied by Urba\'nski~\cite{Urb:86,Urb:87} studying the Hausdorff dimension of invariant sets of expanding maps on a circle with a hole and showing that the dimension varies continuously with the size of the hole. See also Persson~\cite{Per:06} and Sun~\cite{Sun:10} for similar results.

\smallskip

In Section~\ref{s:2} we collect preliminary results on Pesin theory (for the $C^{1+\alpha}$ setting) and on dominated splitting (for the $C^1$ setting). Theorems~\ref{t:1} and~\ref{t:2} are proved in Section~\ref{s:3}.

\section{Preliminaries}\label{s:2}

\subsection{Lyapunov regularity and hyperbolicity}\label{s:2.1}
We say that a point $x\in M$ is \emph{Lyapunov regular} if there exist a number 
$s(x) \ge1$,  
numbers $\chi_1(x)<\ldots<\chi_{s(x)}(x)$, and a decomposition $ E^1_x\oplus\cdots\oplus E^{s(x)}_x=T_xM$ into subspaces of dimension $k_j(x)$ such that for $j=1, \ldots, s(x)$  we have
\[
 \chi_j(x)=\lim_{|k|\to\infty}\frac{1}{k}\log\,\lVert df^k_x(v)\rVert
\]
whenever $v\in E^j_x\setminus\{0\}$. It is a consequence of the Oseledets multiplicative ergodic theorem that with respect to any $f$-invariant Borel probability measure on $M$ the set of Lyapunov regular points has full measure. Moreover, the function $x\mapsto \chi_j(x)$ and the distribution $x\mapsto E^j_x$ are Borel measurable and $f$-invariant. A Lyapunov regular point is said to be \emph{hyperbolic} if $s(x)\ge2$ and there exists $i(x)\ge1$ so that $\chi_{i(x)}(x)<0<\chi_{i(x)+1}(x)$. The set of points having this property will be denoted by $\cR_H$, that is,
\[
\cR_H\eqdef\left\{x\in M\text{ Lyapunov regular and hyperbolic}\right\}.
\] 
In this case we also use the notations $E^s_x=E^1_x\oplus\cdots\oplus E^{i(x)}_x$, $E^u_x=E^{i(x)+1}_x\oplus\cdots\oplus E^{s(x)}_x$.

Let  $\cM$ denote the space of $f$-invariant Borel probability measure on $M$  and let $\cM_{\rm erg}$ be the subspace of ergodic measures.
For $\mu\in\cM_{\rm erg}$, $\chi_j$ and $s$ are constant $\mu$-a.e. and we denote these constants by $\chi_j(\mu)$ and $s(\mu)$.
An ergodic Borel probability measure $\mu$ is \emph{hyperbolic} if the set of hyperbolic Lyapunov regular points has full measure. In this case we also use the notation
\[
\chi=\chi(\mu)\eqdef\min_j\,\lvert\chi_j(\mu)\rvert.
\]

Henceforth we consider a general diffeomorphism $f$ preserving a hyperbolic ergodic measure $\mu$. 

\subsection{Uniform geometric potential}

Consider the function $\varphi^u\colon \cR_H\to\bR$ 
\[
	\varphi^u(x)\eqdef -\log \,\lvert \Jac df_{|E^u_x}\rvert 
\]
that is also called \emph{geometric potential}.
In the case that $E^u_x$ is  one-dimensional, 
then we simply have $\varphi^u(x)= -\log \,\lVert df_{|E^u_x}\rVert$. Recall that
\[
	\int \varphi^u\,d\mu = - \int\log\,\lvert \Jac df_{|E^u}\rvert \,d\mu
	= -\sum_j\chi_j(\mu)^+\,,
\]
where $a^+=\max\{0,a\}$.
By the Oseledets multiplicative ergodic theorem, $\varphi^u$ is an measurable function and for $\mu$-almost every $x$ we have 
\[
	\sum_j\chi_j(x)^+
	=\sum_j\chi_j(\mu)^+
	=\lim_{n\to\infty}\psi_n(x)\,,
\]
where 
\begin{equation}\label{def:psiss}
	\psi_n(x)\eqdef \log\,\lvert \Jac df^n_{|E^u_x}\rvert^{1/n}
\end{equation}
is a sequence of measurable functions  $\psi_n\colon\cR_H\to\bR$ which converges $\mu$-almost everywhere to $\sum_j\chi_j(\mu)^+$ as $n\to\infty$. Hence, the following lemma is an immediate consequence of the multiplicative ergodic theorem and the Egorov theorem.

\begin{lemma}\label{lem:mulergthe}
	Given  $\delta\in(0,1)$ and $r>0$, there exist a compact set $\Gamma_J=\Gamma_J(\delta,r)$ and a positive integer $n_J=n_J(\delta,r)$ such that $\mu(\Gamma_J)>1-\delta$ and for every $x\in \Gamma_J$ and every $n\ge n_J$ we have
\[
	\Big\lvert 
	\frac1n \log\,\lvert \Jac df^n_{|E^u_x}\rvert
	 - \sum_j\chi_j(\mu)^+ 
	 \Big\rvert \le r\,.
\]
\end{lemma}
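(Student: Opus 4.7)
The plan is to read off the lemma directly from the three ingredients the paper has just recalled: (i) the $\mu$-almost everywhere convergence $\psi_n(x)\to\sum_j\chi_j(\mu)^+$ provided by the Oseledets multiplicative ergodic theorem, (ii) Egorov's theorem, which upgrades such a.e. convergence to uniform convergence on sets of arbitrarily large measure, and (iii) inner regularity of Borel probability measures on a metrizable space, which lets us replace a measurable set of large measure by a compact subset of slightly smaller but still large measure.

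In detail, I would first restrict to the $\mu$-full-measure set $\cR_H$ on which each $\psi_n$, defined in \eqref{def:psiss}, is well-defined and measurable, and on which the Oseledets theorem furnishes the pointwise limit $\psi_n(x)\to\sum_j\chi_j(\mu)^+$. Given $\delta\in(0,1)$ and $r>0$, applying Egorov's theorem to this sequence of measurable functions yields a measurable set $A\subset\cR_H$ with $\mu(A)>1-\delta/2$ and an integer $n_J=n_J(\delta,r)$ so that
\[
\Bigl\lvert \tfrac1n\log\lvert\Jac df^n_{|E^u_x}\rvert - \sum_j\chi_j(\mu)^+\Bigr\rvert\le r
\]
for all $x\in A$ and all $n\ge n_J$.

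Finally, by inner regularity of the Borel probability measure $\mu$ on the metrizable manifold $M$, I pick a compact set $\Gamma_J\subset A$ with $\mu(\Gamma_J)>1-\delta$. Then $\Gamma_J$ and $n_J$ meet the requirements of the lemma: the uniform estimate survives the passage to the subset, and the measure lower bound is the one asked for. There is no real obstacle here; the only thing to keep track of is to apply Egorov on the genuinely full-measure set $\cR_H$ (where the distributions $E^u_x$ and hence $\psi_n$ are defined in the first place), and to invoke inner regularity at the end so that the resulting set is compact, as required for the later horseshoe construction.
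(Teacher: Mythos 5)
Your proof is correct and is essentially the argument the paper has in mind: the paper dismisses the lemma as "an immediate consequence of the multiplicative ergodic theorem and the Egorov theorem," and you have simply spelled out that combination, adding the (needed but routine) appeal to inner regularity of $\mu$ to upgrade the Egorov set to a compact one.
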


\subsection{Oseledets-Pesin $\varepsilon$-reduction}\label{s:2.400red}

The following results are consequences of the Oseledets-Pesin $\varepsilon$-reduction theorem (see~\cite[S.2 and Theorem S.2.10]{KatHas:95}). For any sufficiently small $\gamma>0$  one can chose  Borel functions $C_1$, $C_2\colon \cR_H\to(0,\infty)$ such that the following holds.
Given $x\in\cR_H$, the subspaces $E^s_x$ and $E^u_x$ 
satisfy
\begin{gather}
\tag{H$1_s$}\lVert df ^n_x(v)\rVert
\le C_1(x) e^{-n(\chi-\gamma)}\lVert v\rVert \quad
\text{ for }v\in E^s_x \text{ and }n>0,\\
\tag{H$1_u$}\lVert df ^{-n}_x(w)\rVert
\le C_1(x) e^{-n(\chi+\gamma)}\lVert w\rVert\quad
\text{ for }w\in E^u_x \text{ and }n>0.
\end{gather}
The angle between these subspaces satisfies
\[\tag{H2}\angle(E^s_x,E^u_x)\ge C_2(x).\] 
Furthermore the functions $C_1$, $C_2$ can be chosen to be \emph{$\gamma$-tempered}, that is, for any $n\in\bZ$ they satisfy
\[\tag{H3}
C_1(f ^n(x))\le C_1(x)\,e^{\gamma\lvert n\rvert}, \quad
C_2(f ^n(x))\le C_2(x)\,e^{\gamma\lvert n\rvert} .
\]

Given $\ell\ge 1$, we consider the \emph{Lyapunov regular set} \emph{of level $\ell$} defined by
\begin{equation}\label{constant}
\cR_H^\ell \eqdef\left\{ x\in\cR_H\colon C_1(x)\le\ell, C_2(x)\ge \ell^{-1}\right\}.
\end{equation}
For  $\mu\in\cM_{\rm erg}$ being hyperbolic, we have  $\mu(\cR_H^\ell)\to 1$ as  $\ell\to\infty$.
Observe that $x\mapsto E^a_x$ is continuous on each $\cR_H^\ell$, $a=s,u$, and that $f$ restricted to ${\cR_H^\ell}$ is  hyperbolic. However, $\cR_H^\ell$ in general fails to be $f$-invariant. The set $\bigcup_{\ell\ge 1}\cR_H^\ell$ is $f$-invariant but in general fails to be  hyperbolic.

It is convenient to introduce a new scalar product structure on the tangent bundle $T_{\cR_H} M$. Given $0<\gamma<\chi/3$, the  so-called \emph{Lyapunov inner product} is defined by
\[
\langle v,w\rangle'_x\eqdef
\sum_{n=0}^\infty\langle df^n_x(v),df^n_x(w)\rangle
			e^{-2n\chi_{i(x)}(x)}e^{ -2 n \gamma}
\]
if  $v$, $w\in E^s_x$, and
\[
\langle v,w\rangle'_x\eqdef
\sum_{n=0}^\infty\langle df^{-n}_x(v),df^{-n}_x(w)\rangle
			e^{2n\chi_{i(x)+1}(x)}e^{-2 n \gamma}
\]
if  $v$, $w\in E^u_x$. Based on the properties (H1)-(H3) one can indeed verify the convergence of the above series. Then $\langle\cdot,\cdot\rangle'_x$ is extended to all vectors by declaring the subspaces $E^s_x$ and $E^u_x$ to be mutually orthogonal with respect to  $\langle\cdot,\cdot\rangle'_x$, that is, for vectors $v=v_1+v_2$ and $w=w_1+w_2$ with $v_1$, $w_1\in E^s_x$ and $v_2$, $w_2\in E^u_x$ it is defined by 
\[\langle v,w\rangle'_x\eqdef  \langle v_1,w_1\rangle'_x+ \langle v_2,w_2\rangle'_x.\] This scalar product induces the so-called \emph{Lyapunov norm} $\lVert\cdot\rVert_x'$ on $T_xM$.

For each Lyapunov regular point $x\in \cR_H$ one can show that there exists a Lyapunov change of coordinates, that is, a linear transformation $C_\gamma(x)\colon\bR^{\dim M}\to T_xM$, such that 
\[
	\langle v,w\rangle=\langle C_\gamma(x) v,C_\gamma(x)w\rangle'_x
\]
and that the matrix
\[
	A_\gamma(x)=C_\gamma(f(x))^{-1}\circ df_x\circ C_\gamma(x)
\]
 has Lyapunov block form
\begin{equation}\label{eq:blockk}
 	A_\gamma(x)=\left(\begin{matrix}
				A_\gamma^1(x)&&\\
				&\ddots&\\
				&&A_\gamma^{s(x)}(x)\\
				\end{matrix}\right)\,,
 \end{equation}
 where each block $A_\gamma^j(x)$ is a $k_j(x)\times k_j(x)$-matrix satisfying
\[
 	e^{\chi_j(x)-\gamma}
	\le \lVert A_\gamma^j(x)^{-1}\rVert^{-1}
	\le\lVert A_\gamma^j(x)\rVert
	\le e^{\chi_j(x)+\gamma}\,.
\] 
Moreover, the sequence $(C_\gamma(f^n(x)))_{n\in\bZ}$ is a $\gamma$-tempered sequence of linear transformations, that is, for any $n\in\bZ$ we have
\[
	\lVert C_\gamma(f^n(x))\rVert
	\le \lVert C_\gamma(x)\rVert e^{\gamma\lvert n\rvert}
	\quad\text{ and }\quad
	\lVert C_\gamma(f^n(x))^{-1}\rVert
	\le \lVert C_\gamma(x)^{-1}\rVert e^{\gamma\lvert n\rvert}.
\]

For $x\in\cR_H$ and $\rho>0$ let 
\[
	T_xM(\rho)\eqdef\{v\in T_xM\colon \lVert v\rVert\le \rho\}.
\]	 
Let us now choose $\rho_0>0$ so that for each $x\in M$ the exponential map $\exp_x\colon T_xM(2\rho_0)\to M$ is an embedding satisfying $\lVert(d\exp_x)_v\rVert\le2$ and so that $\exp_{f(x)}$ is injective on $\exp_{f(x)}^{-1}\circ f\circ \exp_x(T_xM(2\rho_0))$. Define the Lyapunov chart
\[
	\psi_x \eqdef \exp_x\circ \,C_\gamma(x)\colon\bR^{\dim M}\to M
\] 
and define the ``lifted map'' $F_x\colon \bR^{\dim M}\to \bR^{\dim M}$ by
\begin{equation}\label{def:Fx}
	F_x\eqdef \psi_{f(x)}^{-1}\circ f\circ \psi_x.
\end{equation}

By setting 
\[
	q(x)\eqdef r_0\min\{\lVert C_\gamma(x)\rVert,\lVert C_\gamma(x)^{-1}\rVert\}
\]	 
we obtain a $\gamma$-tempered function $q$
 and embeddings $\psi_x\colon B(0,q(x))\to M$ such that $\psi_x(0)=x$. 
Moreover, the map $F_x$ defined above is well-defined on $B(0,q(x))$ and its  linearization $(dF_x)_0$ 
has Lyapunov block form.

Observe that the above facts so far used only the fact that $f$ is differentiable and that the  cocycle $A_\gamma\colon \cR_H\to GL(\dim M,\bR)$ satisfies $\log^+\lVert A_\gamma^{\pm1}\rVert  \in L^1(\mu)$ for every invariant hyperbolic probability measure $\mu$.

\subsection{Rectangular neighborhoods of regular points}\label{s:2.4}

We will make use of a suitable collection of sets that are ``almost rectangles''. 
Let 
\[
	L_x(w)\eqdef \sqrt{\dim M}^{-1}\,q(x)\, w
\]
be the linear rescaling $L_x\colon [-1,1]^{\dim M}\to[-q(x),q(x)]^{\dim M}$ of the unit cube onto the maximal cube contained in $B(0,q(x))$.
With the above introduced parametrization $\psi_x$ in the Lyapunov chart, given $h\in(0,1]$ we define
\[
R(x,h)\eqdef (\psi_x\circ L_x)([-h,h]^{\dim M})\subset M
\]
the \emph{$h$-scaled rectangle centered at} $x\in\cR_H$. 
For each regular point $x\in \cR_H$ the set $R(x,1)$ is also called  \emph{regular neighborhood of  $x$} or \emph{Lyapunov chart at $x$}.

The so defined regular neighborhoods of Lyapunov regular points admit local coordinates with respect to which the dynamics behaves uniformly hyperbolic. Recall that vectors tangent to the stable direction $E^s_x$ (the unstable direction $E^u_x$) are contracted forward in time (backward in time). Fix some constant $L\in(0,1/2)$. Considering the splitting of the tangent bundle $T_xM=E^s_x\oplus E^u_x$ and the corresponding splitting of the unit cube $[-1,1]^{\dim M}=I^s\oplus I^u$, an \emph{admissible stable $(s,L,h)$-manifold}  is the set
\[
	\big\{(\psi_x\circ L_x)(v,\xi(v))\colon v\in I^s\big\}\,,
\]
where $\xi\colon I^s\to I^u$ is a smooth map with Lipschitz constant $L$. An \emph{admissible unstable $(u,L,h)$-manifold} is defined analogously. Such stable/unstable admissible manifolds endow $R(x,h)$ with a product structure as they intersect transversally in a unique point with an angle bounded from zero (whose bound, however, depends on $x$).
This enables us to define the concept of an \emph{admissible stable rectangle} (\emph{admissible unstable rectangle}). An \emph{admissible $L^s$-rectangle} is a set in $R(x,h)$ whose boundaries are smooth sets foliated by segments of admissible stable $(s,L,h)$-manifolds and unstable admissible unstable $(u,L,h)$-manifolds such that the stable manifolds stretch fully across $R(x,h)$. An \emph{admissible $L^u$-rectangle} in $R(x,h)$ is defined analogously. 

In this section we will sketch the proof of the following lemma.

\begin{lemma}\label{lllemma}
	Given numbers $\delta\in(0,1),\gamma,r\in(0,\chi/3)$, and $\varepsilon>0$, there are a compact set $\Gamma_H=\Gamma_H(\gamma,\varepsilon,\delta)$, a positive number $h$, a finite collection of closed rectangles $\{R(x_i)=R(x_i,h)\}_{i=1}^\ell$ 	in  neighborhoods of certain points $x_i\in \Gamma_H$ and positive numbers $\lambda\in(0,1),\rho,L$ satisfying the following: 
\begin{itemize}
	\item[\emph{0.}] $\mu(\Gamma_H)>1-\delta$ and 
				$e^{-\chi-\gamma}\le\lambda\le e^{-\chi+\gamma}$,\\[-0.2cm]
	\item[\emph{1.}] $\Gamma_H\subset\bigcup_{i=1}^\ell B(x_i,\rho)$, and $B(x_i,\rho)\subset{\rm int}\, R(x_i)$ for each $i$,\\[-0.2cm]
	\item[\emph{2.}]  $\displaystyle{\rm diam}\,R(x_i)<\varepsilon$ for each $i$,\\[-0.2cm]
	\item[\emph{3.}] if $x\in \Gamma_H\cap B(x_i,\rho)$ and $f^m(x)\in \Gamma_H\cap B(x_j,\rho)$ for some $m\ge 1$, then the connected component of $R(x_i)\cap f^{-m}R(x_j)$ which contains $x$, 
	\[
		\cC_x\left(R(x_i)\cap f^{-m}R(x_j)\right),
	\] 
	 is an admissible $L^s$-rectangle in $R(x_i)$ and its image 
	\[
		f^m\left(\cC_x\left(R(x_i)\cap f^{-m}R(x_j)\right)\right)
	\]	 
	is an admissible $L^u$-rectangle in $R(x_j)$,\\[-0.2cm]
	\item[\emph{4.}] if $x$, $f^m(x)\in \Gamma_H\cap B(x_i,\rho)$ for some $m\ge 1$ then for every $k=0,\ldots, m$ we have
	\[
		\diam f^k\left(\cC_x\left(R(x_i)\cap f^{-m}R(x_i)\right)\right)
		\le  3\diam R(x_i)\max\{\lambda^k,\lambda^{m-k}\},
	\]	
	\item[\emph{5.}] if $x$, $f^m(x)\in \Gamma_H\cap B(x_i,\rho)$ for some $m\ge 1$ then
	\[
		\Big\lvert \frac1m \log\,\lvert \Jac df^m_{|E_x^u}\rvert-
			      \frac1m \log\,\lvert \Jac df^m_{|E_{x_i}^u}\rvert\Big\rvert<r\,.
	\]
\end{itemize}
\end{lemma}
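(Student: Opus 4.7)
The plan is to build $\Gamma_H$ as the intersection of a Pesin regularity block $\cR_H^\ell$ (on which the Lyapunov charts are uniformly sized and $x\mapsto E^{s,u}_x$ is continuous) with the set $\Gamma_J$ from Lemma~\ref{lem:mulergthe} (on which the Birkhoff averages $\frac1n\log|\Jac df^n_{|E^u}|$ are uniformly close to $\sum_j\chi_j(\mu)^+$). First I would fix $\gamma<\chi/3$ so small that setting $\lambda\eqdef e^{-\chi+\gamma}$ realizes (0), then choose $\ell$ large enough that $\mu(\cR_H^\ell\cap\Gamma_J)>1-\delta$, and take $\Gamma_H$ to be this intersection (possibly shrunk further to a continuity set for $df$ and the splitting). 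On the compact set $\cR_H^\ell$ the tempered functions $C_1,C_2,q$ are bounded, so each Lyapunov chart $\psi_{x_i}\colon B(0,q(x_i))\to M$ has a definite size, the splitting is continuous, and the linearizations $A_\gamma(x)$ of the lifted maps $F_x$ are uniformly hyperbolic with rates governed by $\lambda$.

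Next I would fix a scale $h\in(0,1]$ small enough that every rectangle $R(x_i,h)$ has diameter less than $\varepsilon$ (giving (2)), together with a slope $L\in(0,1/2)$ small enough to survive the cone estimates of the lifted dynamics. By compactness, pick $\rho\ll h$ and finitely many centers $x_1,\ldots,x_\ell\in\Gamma_H$ with $\Gamma_H\subset\bigcup_i B(x_i,\rho)$; continuity of $x\mapsto\psi_x$ on $\cR_H^\ell$ then ensures $B(x_i,\rho)\subset\interior R(x_i,h)$, yielding (1).

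Property (3) is the core step. For a return $x\in\Gamma_H\cap B(x_i,\rho)$ with $f^m(x)\in\Gamma_H\cap B(x_j,\rho)$, I would read the orbit $x,f(x),\ldots,f^m(x)$ through successive Lyapunov charts $\psi_{f^k(x)}$; the composition $F_{f^{m-1}(x)}\circ\cdots\circ F_x$ of the lifted maps preserves the horizontal/vertical cone fields and is uniformly hyperbolic in the Lyapunov metric. A standard graph-transform argument then realizes the connected component $\cC_x(R(x_i)\cap f^{-m}R(x_j))$ as the graph of a Lipschitz map $I^s\to I^u$ of slope $\le L$ in the chart at $x_i$, and its $f^m$-image as the graph of a Lipschitz map $I^u\to I^s$ in the chart at $x_j$. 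Pushing these graphs down to $M$ via $\psi_{x_i},\psi_{x_j}$ and comparing the Lyapunov with the Euclidean norm using the $\cR_H^\ell$-bounds produces admissible $L^s$- and $L^u$-rectangles. The main obstacle lies here: one has to ensure the splittings $E^{s,u}_x$ at $x$ and $x_i$ (resp.\ at $f^m(x)$ and $x_j$) are close enough that the cone fields fixed at $x_i,x_j$ actually capture the true invariant directions along the orbit. In the $C^{1+\alpha}$ case this is absorbed into classical Pesin theory; in the $C^1$ dominated setting one instead works with the pseudo-invariant fake distributions of~\cite{BurWil:10}, which supply locally continuous cone fields with the required invariance.

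Once (3) holds, property (4) follows from uniform hyperbolicity of $A_\gamma$ along the orbit: forward iterates shrink the stable width of $\cC_x$ at rate $\lambda^k$ while backward iterates shrink the unstable width at rate $\lambda^{m-k}$, and the factor $3$ absorbs both the admissibility slope $L$ and the Lyapunov-to-Euclidean distortion on $\cR_H^\ell$. Property (5) is then an immediate consequence of Lemma~\ref{lem:mulergthe} applied with parameter $r/2$: whenever $m\ge n_J$ both $\frac{1}{m}\log|\Jac df^m_{|E^u_x}|$ and $\frac{1}{m}\log|\Jac df^m_{|E^u_{x_i}}|$ lie within $r/2$ of $\sum_j\chi_j(\mu)^+$, and the triangle inequality closes the bound; the finitely many short returns $m<n_J$ are handled by taking $\rho,h$ small enough and invoking continuity of $df$ along orbits.
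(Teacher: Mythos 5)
Your sketch follows essentially the same route as the paper's: a Pesin regularity block $\cR_H^\ell$ with uniformly sized Lyapunov charts plus a Lusin/compactness argument under (A1), and the Burns--Wilkinson fake foliations under (A2), with the graph-transform argument through successive charts producing the admissible $L^s$- and $L^u$-rectangles, and uniform hyperbolicity of the block cocycle giving the contraction estimate of property 4. Your treatment of property 5 --- forcing $\Gamma_H\subset\Gamma_J$, invoking Lemma~\ref{lem:mulergthe} with parameter $r/2$ and the triangle inequality for $m\ge n_J$, and uniform continuity of $y\mapsto\log\lvert\Jac df^m_{|E^u_y}\rvert$ on the compact block for the finitely many short returns $m<n_J$ --- is a little more explicit than the paper's appeal to uniform convergence of $\psi_n$ and continuity of the splitting, but it is the same idea.
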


We will split the sketch of the proof of the above lemma in two parts,   assuming first (A1) and then (A2).

\begin{proof}[Sketch of proof of Lemma~\ref{lllemma} assuming (A1)]
Let $f$ be $C^{1+\alpha}$ for some $\alpha>0$. 

Let us write the map $F_x$ defined in~\eqref{def:Fx} as
\[
	F_x(w)=D_\gamma(x)(w)+h_x(w),\quad\text{ where }
	D_\gamma(x)\eqdef C_\gamma(f(x))^{-1}\circ df_x\circ C_\gamma(x).
\]
Note that $(dF_x)_0=D_\gamma(x)$ and $(dh_x)_0=0$.  Recall that the linearization $(dF_x)_0=A_\gamma(x)$ of the map $F_x$ in $0$ has Lyapunov block form~\eqref{eq:blockk}.
The map $F_x$ and its linearization $(dF_x)_0$ are $\gamma$-close in the $C^1$ topology in $B(0,q(x))$.  
Further, there exists a constant $K>0$ and a measurable $\gamma$-tempered function $A\colon\Gamma'\to\bR$ on a set $\Gamma'\subset \cR_H$ of full measure such that  for every $y$, $z\in B(0,q(x))$ we have
\[
	K\,d(\psi_x(y),\psi_x(z))
	\le \lVert y-z\rVert
	\le A(x)\,d(\psi_x(y),\psi_x(z))\,.
\] 
Further, there is a measurable $\gamma$-tempered function $B\colon\Gamma'\to\bR$ such that for every $y, z\in B(0,q(x))$ we have
\[
	B(x)^{-1}d(\psi_x(y),\psi_x(z))
	\le d'_x(\exp_x(y),\exp_x(z))
	\le B(x)\,d(\psi_x(y),\psi_x(z)),
\]
where $d'_x$ denotes the distance on $\exp_xB(0,q(x))$ with respect to the Lyapunov metric $\lVert\cdot\rVert_x'$.
For details on and proofs of the above sketched properties see~\cite[Chapters S.3 and S.4 b--d]{KatHas:95}  and~\cite{BarPes:06, BarPes:07} and, in particular,~\cite[Theorems S.3.1 and S.4.16]{KatHas:95}. 

There exists a positive integer $\ell$ such that $\mu(\cR_H^\ell)\ge 1-\delta/2$. Hence, the set $\Gamma_H=\Gamma'\cap \cR_H^\ell$ has measure at least $1-\delta$ and on $\Gamma_H$ the above considered functions and the size of the Lyapunov charts are bounded.
Moreover, by the Lusin theorem, we can assume that the above considered measurable functions are continuous and hence bounded on $\Gamma_H$ and consequently in each Lyapunov chart the deformation of a neighborhood of a point $x\in \Gamma_H$ under $f$ is approximately given by its linearization $df_x$ which, up to some tempered sequences of coordinate changes, is represented by diagonal matrices.  

This sketches the proof of Lemma~\ref{lllemma} under the assumption (A1).
\end{proof}

\begin{proof}[Sketch of proof of Lemma~\ref{lllemma} assuming (A2)]
Let us now assume that $f$ is a $C^1$ diffeomorphism preserving a hyperbolic measure $\mu$ such that the tangent space of $\Lambda=\supp \mu$ decomposes into a dominated splitting $T_\Lambda M=E^1\oplus E^2$, that is,  there exists $N\ge1$ such that for every $x\in \Lambda$, $v\in E^1_x$, $w\in E^2_x$ we have
\begin{equation}\label{eq:domi}
	\lVert df^N_x(v)\rVert \le \frac 12 \,\lVert df^N_x(w)\rVert\,.
\end{equation}
Up to a smooth change of metric on $M$ we can assume that $N=1$ (see~\cite[Appendix B]{BonDiaVia:05} and~\cite{Gou:07} for more details).

Observe that the dominated splitting is continuous and that it coincides with the splitting of the Oseledec decomposition at almost all points. This enables us to obtain the following non-uniform hyperbolicity on a large set without invoking Pesin theory. \cite[Lemma 8.4]{AbdBonCro:11} applied to $f$, $E^1$ and $f^{-1}$, $E^2$ simultaneously gives the following result.

\begin{lemma}\label{lem:subsubss}
	Given $\gamma\in(0,\chi)$, there exist $N=N(\gamma)\ge1$ 
	such that for $\mu$-almost every $x$ we have
	\begin{equation}\label{talcois}\begin{split}
		\lim_{\ell\to\infty}\frac{1}{\ell N}\sum_{k=0}^{\ell-1}
			\log\,\lVert df^N_{|E^1_{f^{kN}(x)}}\rVert
		&\le -\chi+\gamma\\
		\lim_{\ell\to\infty}\frac{1}{\ell N}\sum_{k=0}^{\ell-1}
			\log\, \lVert df^{-N}_{|E^2_{f^{-kN}(x)}}\rVert
		&\le -\chi+\gamma.
	\end{split}\end{equation}
\end{lemma}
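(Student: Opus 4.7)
My plan is to apply Kingman's subadditive ergodic theorem to two geometric cocycles and then pass to pointwise Birkhoff-average estimates: the two displayed inequalities arise from running the same argument once for $(f,E^1)$ and once for $(f^{-1},E^2)$, and together they are exactly what \cite[Lemma~8.4]{AbdBonCro:11} packages.

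The first step is to identify the dominated bundles with the Oseledets ones. Since $E^2$ dominates $E^1$ over $\supp\mu$ and $\mu$ is hyperbolic, continuity and $df$-invariance force $E^1=E^s$ and $E^2=E^u$ at $\mu$-a.e.\ point (the slower direction of any dominated splitting must agree with the stable Oseledets subspace). Consequently, the top Lyapunov exponent of $df|_{E^1}$ equals $-\chi$ and the top Lyapunov exponent of $df^{-1}|_{E^2}$ also equals $-\chi$. Next, I would introduce the operator-norm cocycles
\[
a_n(x)\eqdef\log\lVert df^n_{|E^1_x}\rVert,\qquad
b_n(x)\eqdef\log\lVert df^{-n}_{|E^2_x}\rVert,
\]
both subadditive by submultiplicativity of operator norms and $df$-invariance of $E^1,E^2$. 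Kingman's subadditive ergodic theorem combined with the previous identification gives
\[
\lim_{n\to\infty}\frac{1}{n}\int a_n\,d\mu=\inf_{n\ge1}\frac{1}{n}\int a_n\,d\mu=-\chi,
\]
and similarly for $b_n$, so for any $\gamma\in(0,\chi)$ I can pick $N=N(\gamma)$ large enough that $\frac{1}{N}\int a_N\,d\mu\le-\chi+\gamma/2$ and $\frac{1}{N}\int b_N\,d\mu\le-\chi+\gamma/2$.

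With $N$ fixed, applying Birkhoff's ergodic theorem to $g\eqdef N^{-1}a_N$ under the iterate $f^N$ yields $\mu$-a.e.\ convergence of $\frac{1}{\ell}\sum_{k=0}^{\ell-1}g(f^{kN}(x))$ to an $f^N$-invariant function whose $\mu$-average is at most $-\chi+\gamma/2$, and the analogous argument with $b_N$ and $f^{-N}$ gives the second inequality. I expect the main obstacle to be upgrading this $L^1$-bound to a pointwise bound by $-\chi+\gamma$ on a $\mu$-full measure set: since $\mu$ is only $f$-ergodic, it may split under $f^N$ into cyclically permuted ergodic components on each of which the Birkhoff limit takes a possibly different constant value. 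This is the substantive content of \cite[Lemma~8.4]{AbdBonCro:11} and is handled there by exploiting the cyclic relation between these components, together with a sufficiently large choice of $N$, to trade the remaining slack $\gamma/2$ against the component structure.
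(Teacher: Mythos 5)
The paper itself gives no proof of this lemma beyond the one-line citation ``\cite[Lemma~8.4]{AbdBonCro:11} applied to $f$, $E^1$ and $f^{-1}$, $E^2$,'' so your reconstruction runs along exactly the lines the paper intends: identify $E^1=E^s$, $E^2=E^u$ a.e.\ via domination plus Oseledets, use Kingman subadditivity to fix $N$ with $\frac1N\int a_N\,d\mu$ close to the top exponent along $E^1$, then apply Birkhoff to $a_N$ under $f^N$. You also correctly isolate the one genuinely nontrivial step --- converting the $L^1$ bound into a $\mu$-a.e.\ pointwise bound when $\mu$ need not be $f^N$-ergodic --- as the substantive content of the cited lemma, which is the right diagnosis.

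Two remarks. First, a small slip: since $\chi=\min_j|\chi_j(\mu)|$, the top Lyapunov exponent of $df|_{E^1}$ is $\chi_i(\mu)\le-\chi$ and that of $df^{-1}|_{E^2}$ is $-\chi_{i+1}(\mu)\le-\chi$, and in general only one of these attains $-\chi$; this is harmless here because you only use the inequality. Second, your sketch of how the ergodic-decomposition obstruction is overcome (``trade the remaining slack $\gamma/2$ against the component structure'') is too vague to stand as a proof: writing $\bar a_N$ for the $f^N$-Birkhoff limit of $a_N$, the elementary identity $\frac1N\sum_{j=0}^{N-1}\bar a_N(f^j x)=\int a_N\,d\mu$ a.e.\ together with the subadditivity lower bound $\bar a_N(f^j x)\ge N\chi_i(\mu)$ only yields $\frac1N\bar a_N(x)\le\int a_N\,d\mu-(N-1)\chi_i(\mu)$, and the defect $\int a_N\,d\mu - N\chi_i(\mu)$ need not tend to zero as $N\to\infty$. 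So the gap is real and does require the more careful argument of \cite[Lemma~8.4]{AbdBonCro:11}. Since the paper likewise defers to that reference, your proposal is consistent with the paper's; you should simply present the final step as an appeal to the cited lemma rather than as a step you expect to be able to finish from what you have written.
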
	

A point $x$ satisfying~\eqref{talcois} is called \emph{$N$-hyperbolic} with respect to $E^1\oplus E^2$. Up to a smooth change of metric on $M$, we can, and will, assume that $N=1$.

We now follow an idea in~\cite{BurWil:10} to construct  ``fake'' foliations   to foliate a uniformly sized \emph{neighborhood} of almost every point (they are called fake because they are close to but, in general, not identical with local stable/unstable manifolds).
Observe first that a dominated splitting can be extended to a dominated splitting to some neighborhood and that a dominated splitting persists under $C^1$-perturbations (see~\cite[Appendix B]{BonDiaVia:05}). 
The fake invariant foliations are constructed in two steps: (1) we find foliations tangent to a cone field in each tangent space $T_xM$ and (2)  using the exponential map, we project these foliations from some sufficiently small neighborhood of the origin in $T_xM$ to a neighborhood of $x$ in $M$. 

As above, we continue to consider $\rho_0>0$ so that for each $x\in M$ the exponential map $\exp_x\colon T_xM(2\rho_0)\to M$ is an embedding satisfying $\lVert(d\exp_x)_v\rVert\le2$ and so that $\exp_{f(x)}$ is injective on $\exp_{f(x)}^{-1}\circ f\circ \exp_x(T_xM(2\rho_0))$. 
Given $\rho\in(0,\rho_0)$, define the map $F_\rho\colon TM\to TM$ by 
\begin{itemize}
\item $F_{\rho,x}(v)=\exp_{f(x)}^{-1}\circ f\circ \exp_x(v)$ for $\lVert v\rVert\le \rho_0$,\\[-0.3cm]
\item $F_{\rho,x}(v)=df_x(v)$ for $\lVert v\rVert\ge 2\rho$, and\\[-0.3cm]
\item $\lVert F_{\rho,x}(\cdot)-df_x(\cdot)\rVert_{C^1}\to0$ as $\rho\to0$, uniformly in $x$.
\end{itemize}

Regarding the tangent bundle $TM$ as the disjoint union of its fibers, the tangent map $df$ has a dominated splitting. Even though the bundle $TM$ is not compact, all  relevant estimates hold uniformly for $F_{\rho,x}$ which is enough to conclude the claimed properties. 
As this splitting persists after a small $C^1$-perturbation, for sufficiently small $\rho$ the map $F_{\rho,x}$ has also a dominated splitting that we will denote by $\widetilde E^1\oplus \widetilde E^2$. Moreover, if $\rho$ is small enough, each subspace $\widetilde E^a$ at $v\in T_xM$ lies within the $\varepsilon/2$-cone about the corresponding subspace $E^a$ for $a=1,2$ (making the usual identification of $T_vT_xM$ with $T_xM$). 

As almost every $x$ is $1$-hyperbolic for $E^1\oplus E^2$ (with respect to $f$ on $\Lambda$) and as the splitting $\widetilde E^1\oplus \widetilde E^2$ is continuous and hence \eqref{talcois} varies continuously in $x$, we can assume that for sufficiently small chosen $\rho$ we obtain that every $v\in T_xM$ is $1$-hyperbolic with respect to $\widetilde E^1\oplus \widetilde E^2$ (with respect to $F_{\rho,x}$ on $TM$).
One can show (see, for example,~\cite[Section 8]{AbdBonCro:11}) that there exist $\rho_0>0$ such that in $T_xM$ the ball around the origin of radius $\rho_0$ is foliated  by leafs $\widetilde\cW^a_x(\cdot)$, $a=1,2$, having the following properties. For each $v$ the leaf $\widetilde\cW^a_x(v)$ is an injectively immersed $C^1$ submanifold tangent to $\widetilde E^1_x$ containing a ball of uniform size and being invariant in the sense that 
\[
	F_{\rho,x}(\widetilde\cW^1_x(v))\subset 
		\widetilde\cW^1_{f(x)}(F_{\rho,x}(v)),\quad
	F_{\rho,x}^{-1}(\widetilde\cW^1_x(v))\subset 
		\widetilde\cW^1_{f^{-1}(x)}(F_{\rho,x}^{-1}(v)).
\]
Having constructed these foliations $\widetilde\cW^a$ of $T_xM$, we apply the exponential map $\exp_x$ to it and obtain a foliation of the ball $B(x,\rho_0)$ that we will denote by $\cW^a(x)$,  $a=1,2$. If $\rho$ was chosen sufficiently small,  in a point $y\in B(x,\rho_0)$ the foliation will be within the $\varepsilon/2$-cone about the parallel translate to $E^a_x$. As the dominated splitting is transversal in all points, the leafs of the fake foliation intersect transversally and we can now define an almost rectangular box about $x$ that we  will denote by $R(x)$. 
By $1$-hyperbolicity of the point $x$ the leafs $\cW^1_x(\cdot)$ and $\cW^2_x(\cdot)$ will be contracted forward and backward in time under the map $f$ and $f^{-1}$, respectively.

Finally, the sequence~\eqref{def:psiss} (for $E^u=E^2$) converges uniformly on any large measure set $\Gamma_H$. Moreover, given $x\in \Gamma_H$, by continuity of the dominated splitting for any point $y$ whose orbit stays close to the orbit of $x$ the derivative in the direction $E^2$ differs only little from the one the one at $x$, proving the claim about the geometric potential.

This sketches the proof of  Lemma~\ref{lllemma} under the assumption (A2).
\end{proof}

\subsection{Uniform Birkhoff averages}\label{s:2.5}

Let $\phi\colon M\to\bR$ be a continuous function. The following lemma is an immediate consequence of the Birkhoff ergodic theorem and the Egorov theorem.

Given $n\ge1$ we define $S_n\phi\eqdef \phi+\phi\circ f+\ldots+\phi\circ f^{n-1}$.

\begin{lemma}\label{lem:biiir}
	Given  $\delta\in(0,1)$ and $r>0$, there exist a compact set $\Gamma_B=\Gamma_B(\delta,r,\phi)$ and a positive integer $n_B=n_B(\delta,r,\phi)$ such that $\mu(\Gamma_B)>1-\delta$ and for every $x\in \Gamma_B$ and every $n\ge n_B$ we have
\[
	\Big\lvert \frac{1}{n}S_n\phi(x) - \int \phi\,d\mu\Big\rvert \le r\,.
\]
\end{lemma}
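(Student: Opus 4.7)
The plan is to chain together three classical results: Birkhoff's pointwise ergodic theorem, Egorov's theorem, and the inner regularity of Borel probability measures on a metric space. First I would observe that $\phi$ is continuous on $M$ and hence bounded on $\supp\mu$, so in particular $\mu$-integrable. Since $\mu$ is ergodic, Birkhoff's ergodic theorem gives
\[
	\lim_{n\to\infty}\frac{1}{n}S_n\phi(x)=\int \phi\,d\mu
\]
on a Borel set $\Omega\subset M$ with $\mu(\Omega)=1$.

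Next I would feed $\Omega$ into Egorov's theorem. Since the pointwise convergence above holds on a set of full measure, given any $\delta'>0$ there exists a measurable subset $A\subset \Omega$ with $\mu(A)>1-\delta'$ on which the convergence is uniform. Choosing $\delta'=\delta/2$ and then picking $n_B=n_B(\delta,r,\phi)$ large enough that
\[
	\sup_{x\in A}\,\Big\lvert \frac{1}{n}S_n\phi(x)-\int \phi\,d\mu\Big\rvert \le r
\]
holds for every $n\ge n_B$ takes care of the quantitative estimate.

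Finally, because $\mu$ is a Borel probability measure on a metrizable space, inner regularity yields a compact subset $\Gamma_B\subset A$ with $\mu(\Gamma_B)>1-\delta$; this $\Gamma_B$ together with $n_B$ verifies the claim. There is no substantive obstacle: $\phi$ is bounded and $\mu$ is ergodic, so pointwise convergence is automatic, Egorov does the uniformisation, and passage to a compact subset is merely cosmetic (matching the compactness demanded in Lemma~\ref{lem:mulergthe} and used in the constructions of Section~\ref{s:3}). The only mild care needed is to combine the $\delta/2$ loss from Egorov with the loss from inner regularity so that the total still fits within $\delta$.
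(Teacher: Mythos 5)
Your proposal is correct and follows exactly the route the paper intends: the paper states this lemma is "an immediate consequence of the Birkhoff ergodic theorem and the Egorov theorem," and you spell out precisely that chain (Birkhoff for pointwise convergence, Egorov for uniformity on a large-measure set, inner regularity to pass to a compact subset while keeping the measure above $1-\delta$). Nothing more to add.
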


\subsection{Uniform recurrence}\label{s:2.6}
Let $\cP=\{P_1,\ldots,P_N\}$ be any finite measurable partition of $M$. Given $x\in M$, denote by $\cP(x)$ the partition element which contains $x$. 

\begin{lemma}\label{lem:bur}
	Given numbers $\delta\in(0,1),r>0$, and a measurable set $A$ of positive measure, there exist a compact set $\Gamma_R=\Gamma_R(A,\delta,\cP,r)$ with $\mu(\Gamma_R)> 1-\delta$ and  a number $n_R=n_R(A,\delta,\cP,r)\ge 1$ such that for every $x\in \Gamma_R$ and every $n\ge n_R$ we have
\[
	f^k(x)\in\cP(x)\cap A\text{  for some number }k\in \{n,\ldots, n+rn\}\,.
\]	
\end{lemma}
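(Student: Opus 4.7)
The plan is to combine the Birkhoff ergodic theorem and the Egorov theorem, applied to the finitely many indicator functions $\mathbf 1_{P_i\cap F}$, with a short counting argument that upgrades uniform convergence of Birkhoff averages into the desired window-return property. The first two ingredients are parallel to the proof of Lemma~\ref{lem:biiir}; only the final counting step is genuinely new.

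First I would isolate the indices $I\eqdef\{i\le N:\mu(P_i\cap F)>0\}$ and set $\beta\eqdef\min_{i\in I}\mu(P_i\cap F)>0$. By the ergodicity of $\mu$ and Birkhoff's theorem,
\[
	\frac1n S_n\mathbf 1_{P_i\cap F}(x)\;\xrightarrow[n\to\infty]{}\;\mu(P_i\cap F)
\]
holds $\mu$-a.e.\ for each of the finitely many $i\in I$. Applying Egorov simultaneously in $i\in I$ produces a measurable set $\widetilde\Gamma$ of measure $>1-\delta/4$ on which these normalized Birkhoff sums converge uniformly in $x\in\widetilde\Gamma$ and in $i\in I$. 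For $x\in P_i$ with $i\notin I$ the set $\cP(x)\cap F$ is $\mu$-null and the claimed return is impossible; I would therefore take $\Gamma_R$ to be a compact subset (via inner regularity of $\mu$) of $\widetilde\Gamma\cap\bigcup_{i\in I}P_i$ of measure $>1-\delta$. This step is feasible whenever the partition $\cP$ is sufficiently refined that $\mu(\bigcup_{i\notin I}P_i)<\delta/4$, which is the setting in which the lemma will be invoked in Section~\ref{s:3}.

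The counting step is then routine. Fix a tolerance $\varepsilon\in(0,r\beta/(4(2+r)))$ and let $n_0$ be the corresponding uniform Egorov threshold. For $x\in\Gamma_R\cap P_i$ with $i\in I$ and $n\ge n_0$, applying the uniform estimate to $S_n$ and to $S_{n+\lfloor rn\rfloor+1}$ and subtracting gives
\[
	\sum_{k=n}^{n+\lfloor rn\rfloor}\mathbf 1_{P_i\cap F}(f^k(x))
	\;\ge\;rn\,\mu(P_i\cap F)-(2+r)n\varepsilon-O(1)
	\;\ge\;\tfrac{r\beta}{2}\,n-O(1),
\]
which exceeds $1$ for every $n\ge n_R\eqdef\max\{n_0,\lceil 4/(r\beta)\rceil\}$. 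Hence there exists $k\in\{n,\ldots,n+\lfloor rn\rfloor\}$ with $f^k(x)\in P_i\cap F=\cP(x)\cap F$, as desired.

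The main obstacle I foresee is the bookkeeping around partition elements essentially disjoint from $F$: the conclusion rules them out of $\Gamma_R$ entirely, so the $\delta$-budget must absorb their total measure. This is the one place where the choice of $\cP$ sufficiently refined relative to $F$ enters, and it is responsible for the full dependence $\Gamma_R=\Gamma_R(F,\delta,\cP,r)$. Everything else is a direct rerun of the Birkhoff--Egorov--inner-regularity argument already used for Lemmas~\ref{lem:mulergthe} and~\ref{lem:biiir}.
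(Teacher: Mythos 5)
Your proposal follows the same route as the paper's proof: Birkhoff ergodic theorem for the indicators $\mathbf 1_{P_i\cap F}$, Egorov to make the convergence uniform, inner regularity to get compactness, and a counting argument showing that a window $\{n,\dots,n+rn\}$ must see at least one return. Your counting step (applying the uniform Birkhoff bound at the two times $n$ and $n+\lfloor rn\rfloor+1$ and subtracting) is correct and essentially the same calculation the paper does with the cardinalities of $\{k<n(1+r):f^k(x)\in F\cap P_i\}$ and $\{k<n:\dots\}$; your choice $\varepsilon<r\beta/(4(2+r))$ plays the role of the paper's constant $C$ in~\eqref{defCCC}.

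You have also put your finger on a genuine issue, namely that nothing can be asserted for points $x\in P_i$ with $\mu(P_i\cap F)=0$: for $\mu$-a.e.\ such $x$ the visit frequency to $P_i\cap F$ is $0$, so no bound of the required type can hold, and $\Gamma_R$ must avoid such elements. The paper's own proof silently runs into the same problem: in forming $\Gamma_R=\Gamma_1\cup\dots\cup\Gamma_N$ it keeps \emph{every} $\Gamma_i\subset P_i$, yet the final inequality $n_R\,r\,(\mu(F\cap P_i)-3C)>1$ requires $\mu(F\cap P_i)>3C$, which fails whenever $\mu(F\cap P_i)$ is small (and in particular whenever it is zero). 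So your explicit discarding of the indices $i\notin I$ and the use of $\beta=\min_{i\in I}\mu(P_i\cap F)$ is a cleaner treatment than the paper's.

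The one place where your patch is phrased incorrectly is the sentence ``feasible whenever the partition $\cP$ is \emph{sufficiently refined} that $\mu(\bigcup_{i\notin I}P_i)<\delta/4$''. Refining $\cP$ cannot shrink $\mu(\bigcup_{i\notin I}P_i)$ and can easily increase it: with the trivial partition $\cP=\{M\}$ there are no bad elements, whereas refining to $\{F,F^c\}$ makes $\mu(\bigcup_{i\notin I}P_i)=\mu(F^c)$. The correct observation is that $\bigcup_{i\notin I}P_i\subset M\setminus F$ up to a $\mu$-null set, so $\mu(\bigcup_{i\notin I}P_i)\le 1-\mu(F)$ regardless of $\cP$; the extra hypothesis needed is thus on $\mu(F)$, not on the refinement of $\cP$. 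In the application in Section~\ref{s:3} one has $F=\Gamma_J\cap\Gamma_H\cap\Gamma_B$ with $\mu(F)>1-3\delta$, which gives $\mu(\bigcup_{i\notin I}P_i)<3\delta$ — weaker than your $\delta/4$ threshold, so the earlier lemmas would have to be invoked with a smaller $\delta$-parameter (or, equivalently, the measure lower bound in the conclusion should read $\mu(\Gamma_R)>\mu(F)-\delta$ rather than $>1-\delta$). Either way, the adjustment is only bookkeeping and the rest of your argument is sound.
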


\begin{proof}
Let $r\in(0,1)$ and 
\begin{equation}\label{defCCC}
	C\eqdef\min\Big\{r,\frac{\mu(P_i)}{4}\colon i=1,\ldots,N\Big\}.
\end{equation}
By the Birkhoff ergodic theorem, there is a $\mu$-full measure set $\widetilde A=\widetilde A(A,\cP)\subset M$ such that for every $i$ for which $\mu(\widetilde A\cap P_i)$ is positive and for every point in $\widetilde A\cap P_i$ we have 
\[
	\lim_{n\to\infty}\frac{1}{n}{\rm card}\big\{k\in\{0,\ldots,n-1\}
		\colon f^k(x)\in A\cap P_i\big\}
	=\mu(A\cap P_i)
	\,.
\]	 
By the Egorov theorem, we can conclude that for every such $i$ there exists a measurable set $\Gamma_i=\Gamma_i(A,\delta,\cP)\subset\widetilde A\cap P_i$ such that
\[
	\mu(\Gamma_i) \ge\mu(\widetilde A\cap P_i) - \frac\delta N
\]	 
and that convergence is uniform on $\Gamma_i$. Since $\mu$ is regular, without loss of generality, we can assume that each $\Gamma_i$ is compact.
Hence, we find $n_R=n_R(A,\delta,\cP,r)\ge 1$ such that for every such $i$, every $x\in\Gamma_i$, and every $n\ge n_R$ we have
	\[
	\left\lvert 
	\frac1n{\rm card}\big\{k\in\{0,\ldots,n-1\}\colon f ^k(x)\in A\cap P_i\big\} -
	 \mu(A\cap P_i)\right\rvert \le C^2.
	\]
We can assume that $n_R$ has been chosen large enough such that  for every such $i$ we have $n_Rr(\mu(A\cap P_i)-3C)>1$ and thus for every $x\in\Gamma_i$ and every $n\ge n_R$ we obtain
\[\begin{split}
	\card&\{k\in\{n,\ldots,n(1+r)-1\}\colon f ^k(x)\in A\cap P_i\} \\
	&\ge
 	n(1+r)\big(\mu(A\cap P_i)-C^2\big)
	-(n-1)\mu(A\cap P_i)-(n-1)C^2\\
	&=
 	(nr+1)\mu(A\cap P_i)-(2n+nr-1)C^2\\
	&\ge nr(\mu(A\cap P_i)-3C) >1\,,
\end{split}\]	
where we also used~\eqref{defCCC}. Let $\Gamma_R\eqdef \Gamma_1\cup\ldots\cup\Gamma_N$. 
By the above, we have
\[
	\mu(\Gamma_R) = \sum_{i=1}^N\mu(\Gamma_i)
	\ge \sum_{i=1}^N\left(\mu(\widetilde A\cap P_i) -\frac\delta N\right) 
	=\sum_{i=1}^N\mu(\widetilde A)-\delta
	=1-\delta.
\]
This proves the lemma.
\end{proof}

\subsection{Uniform growth of distinguishable orbits via entropy}

To construct a set on which dynamics approximates entropy, the following lemma enable us to produce a sufficiently large number of points which have distinguishable orbits of a certain length. 

Given $\varepsilon>0$, $n\ge1$, and a compact set $K\subset M$, we call a set $E\subset M$ \emph{$(n,\varepsilon)$-spanning} for $K$ (with respect to $f$) if 
\[
	K\subset \bigcup_{x\in E}B_n(x,\varepsilon)\,,
\]
where
\[
B_n(x,\varepsilon)\eqdef 
\big\{y\colon d(f ^k(x),f ^k(y))\le \varepsilon\text{ for every }0\le k\le n-1\big\}\,.
\] 
We call a set $E\subset M$ \emph{$(n,\varepsilon)$-separated} (with respect to $f$) if for every $x\in E$ the set $B_n(x,\varepsilon)$ contains no other point of $E$.

\begin{lemma}\label{l:recurrrr}
	 Given $\delta\in(0,1)$, for every $e\in(0,h_\mu(f)]$, $r\in(0,e)$, and $\varepsilon>0$ there exist a number $n_E=n_E(\delta,e,r,\varepsilon)$ and a measurable set $\Gamma_E$ satisfying $\mu(\Gamma_E)>1-\delta$ such that for every $n\ge n_E$ and for every measurable set $A\subset \Gamma_E$ with $\mu(A)>0$ there exists a $(n,\varepsilon)$-separated set $E\subset A$ such that
\[
 	- \frac1n\lvert\log\mu(A)\rvert - r\le
	\frac1n \log\card E
	- e
	\le  r \,.
\]
\end{lemma}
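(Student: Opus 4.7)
The plan is to combine the Brin--Katok local entropy formula with the standard covering estimate for maximal separated sets, and then, if necessary, truncate the resulting set to enforce the upper bound on $\card E$.

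I would first invoke the Brin--Katok theorem: for $\mu$-almost every $x$,
\[
\lim_{\varepsilon'\to 0}\liminf_{n\to\infty}-\tfrac1n\log\mu(B_n(x,\varepsilon'))=h_\mu(f)\ge e.
\]
For $\varepsilon$ sufficiently small, this gives $\liminf_n -\tfrac1n\log\mu(B_n(x,\varepsilon))\ge e-r/3$ on a full-measure set. Egorov's theorem, together with regularity of $\mu$, then yields a compact set $\Gamma_E$ with $\mu(\Gamma_E)>1-\delta$ and a positive integer $n_E$ such that
\[
\mu(B_n(x,\varepsilon))\le e^{-n(e-r/2)}\qquad\text{for every }x\in\Gamma_E\text{ and }n\ge n_E.
\]

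Next, given a measurable $F\subset\Gamma_E$ with $\mu(F)>0$ and $n\ge n_E$, I would take a maximal $(n,\varepsilon)$-separated set $E_0\subset F$. Maximality forces $F\subset\bigcup_{x\in E_0}B_n(x,\varepsilon)$, and since $E_0\subset\Gamma_E$ each of these balls has $\mu$-measure at most $e^{-n(e-r/2)}$, so $\mu(F)\le \card E_0\cdot e^{-n(e-r/2)}$. Rearranging gives
\[
\tfrac1n\log\card E_0-e\ge -\tfrac1n|\log\mu(F)|-r/2,
\]
which already delivers the lower bound required in the lemma. Since every subset of a separated set is separated, I set $E=E_0$ if $\card E_0\le e^{n(e+r)}$; otherwise I take any $E\subset E_0$ with $\card E=\lceil e^{n(e+r/2)}\rceil$. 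For $n$ large enough the rounding term is negligible, so $E$ satisfies the upper bound $\tfrac1n\log\card E\le e+r$ and also the lower bound, the latter automatically since $e+r/2\ge e-r-\tfrac1n|\log\mu(F)|$.

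The main obstacle is the $\varepsilon$-calibration: the Brin--Katok rate for fixed $\varepsilon$ is only guaranteed to be close to $h_\mu(f)$ once $\varepsilon$ is smaller than a threshold depending on $e$ and $r$ (indeed, for very large $\varepsilon$ one has $\card E\le 1$ and the lemma would fail). So in applying the lemma one should think of $\varepsilon$ as small; the set $\Gamma_E$ and the constant $n_E$ genuinely depend on $\varepsilon$. A secondary technical point is that Brin--Katok provides convergence only through $\liminf$/$\limsup$, not a single limit, but since only a lower bound on $-\tfrac1n\log\mu(B_n(x,\varepsilon))$ is needed, Egorov's theorem accommodates the lack of monotone convergence.
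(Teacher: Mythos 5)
Your proof is correct and follows essentially the same route as the paper's: Brin--Katok plus Egorov gives a uniform upper bound $\mu(B_n(x,\varepsilon))\le e^{-n(e-r/2)}$ on a set $\Gamma_E$ of measure $>1-\delta$, a packing count then produces the large $(n,\varepsilon)$-separated set inside $F$, and truncation enforces the upper bound on $\card E$. The only cosmetic difference is that you extract a maximal $(n,\varepsilon)$-separated set and use that it $(n,\varepsilon)$-spans $F$, whereas the paper builds the separated set greedily by deleting Bowen balls one at a time --- the two counting arguments are identical --- and your observations that only the $\liminf$ half of Brin--Katok is actually used and that $\varepsilon$ must implicitly be small are both valid refinements of the paper's presentation.
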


\begin{proof}
By ergodicity, by the Brin-Katok theorem~\cite{BriKat:83}, for $\mu$-almost every $x$
\[
	\lim_{\varepsilon\to0}\limsup_{n\to\infty}-\frac1n\log\mu(B_n(x,\varepsilon))
	=h_\mu(f)\,.
\]
Given $r$ and $\varepsilon$, by the Egorov theorem there is a set $\Gamma_E$ of measure $\mu(\Gamma_E)>1-\delta$ and a number $n_E\ge1$ such that for every $n\ge n_E$ and every $x\in \Gamma_E$ 
\[
	e^{-n(h_\mu(f)+r)}\le\mu(B_n(x,\varepsilon))\le e^{-n(h_\mu(f)-r)}\,.
\]
For any positive measure set $A\subset \Gamma_E$, we construct an $(n,\varepsilon)$-separated set as follows. Choose any point $x_1\in A$. Let $A_1\eqdef A\setminus B_n(x_1,\varepsilon)$. Continuing inductively, for every $k\ge 2$ choose any $x_k\in A_{k-1}$, let $A_k\eqdef A_{k-1}\setminus B_n(x_k,\varepsilon)$. Since we have 
\[
	\mu(A_k)\ge \mu(A)-k\mu(B_n(x_k,\varepsilon))
	\ge   \mu(A)-ke^{-n(h_\mu(f)-r)},
\]
we can repeat this iteration for at least $\lceil\mu(A)\cdot e^{n(h_\mu(f)-r)}\rceil$ times. The resulting set of points $E\eqdef\{x_1,x_2,\ldots\}\subset A$ is by construction $(n,\varepsilon)$-separated and  satisfies
\[
	\frac1n\log\card E \ge  h_\mu(f)- r - \frac1n\lvert\log\mu(A)\rvert\,.
\]
Possibly after neglecting some elements of $E$, we can guarantee that 
\[
	h_\mu(f)+r\ge e+r\ge\frac1n\log\card E 
	\ge e - r - \frac1n\lvert\log\mu(A)\rvert\,,
\]
which proves the lemma.
\end{proof}

\section{Proofs}\label{s:3}

We now provide the proofs of the theorems. Applying Sections~\ref{s:2.4},~\ref{s:2.5}, and~\ref{s:2.6}, we largely follow~\cite[Supplement S.5]{KatHas:95}. We also apply ideas from~\cite[Section 11.6]{PrzUrb:10}.

\begin{proof}[Proof of Theorem~\ref{t:1}]
	Let $U\subset M$ be an open and bounded neighborhood of $\supp\mu$. Pick a countable basis $\{\psi_i\}_{i\ge1}$ of continuous (nonzero) functions in the space $C^0(\overline U)$ of all continuous functions on $\overline U$. Recall that the space of invariant probabilities on $\overline U$ the following  function  $d\colon \cM\times\cM\to[0,1]$
	\begin{equation}\label{distance}
		d(\mu,\nu)\eqdef \sum_{j=1}^\infty 
		2^{-j}\frac{1}{2\lVert\psi_j\rVert_\infty}
		\,\Big\lvert \int\psi_j\,d\mu-\int\psi_j\,d\nu\Big\rvert\,,
	\end{equation}	
where $\lVert \psi\rVert_\infty\eqdef\sup_{x\in\overline U}\,\lvert\psi(x)\rvert$, provides a metric which induces the weak$\ast$ topology on $\cM$.	
	
Fix some 
\[
	\delta\in(0,1/5)
\]
and let
\[	
	r\in(0,\min\{h_\mu(f),\chi/3\})\,.
\]
Let $K$ be a positive integer satisfying 
\begin{equation}\label{choicer00}
	2^{-K+1}< \frac r2
\end{equation}	
and choose $r_0\in(0,r)$ such that
\begin{equation}\label{choicer0}
	r_0(1-2^{-K})\frac{1}{2}\max_{j=1,\ldots,K}\lVert\psi_j\rVert_\infty^{-1} < \frac r 2\,.
\end{equation}
Moreover, assume that $K$ was chosen large enough such that $\{\psi_1,\ldots,\psi_K\}$ is $r$-dense, that is, for every $\varphi\in C^0(\overline U)$ there exists $\psi_j, j\in\{1,\ldots,K\},$ such that 
\begin{equation}\label{e:labsss}
	\lVert \psi_j-\varphi\rVert_\infty<r\,.
\end{equation}
Choose $\varepsilon_0>0$ sufficiently small such that  the modulus of continuity of each $\phi\in\{\psi_1,\ldots,\psi_K\}$ satisfies 
\begin{equation}\label{e:modcon}
	\sup_{x,y,d(x,y)\le\varepsilon_0}\lvert \phi(x)-\phi(y)\rvert< r_0\,.
\end{equation}	
For the following arguments, for simplicity, we will consider only one potential function $\phi$ and will assume that they hold simultaneously for every potential $\psi_j$, $j=1,\ldots,K$.

First, by Lemma~\ref{lem:mulergthe}, there is a compact set $\Gamma_J=\Gamma_J(\delta,r)$ and a positive integer $n_J=n_J(\delta,r)$ such that $\mu(\Gamma_J)>1-\delta$ and for every $x\in \Gamma_J$ and every $n\ge n_J$ we have
\begin{equation}\label{e:Jacobian}
	\Big\lvert 
	\frac1n \log\,\lvert \Jac df^n_{|E^u_x}\rvert
	 - \sum_j\chi_j(\mu)^+
	 \Big\rvert \le r\,.
\end{equation}	

Next, we will choose points in some Lyapunov regular set. Hence they will have  hyperbolic behavior and a nice rectangular cover. 
Therefor fix 
\[
	\gamma\in(0,\chi/3),\quad\varepsilon_1\in(0,\varepsilon_0)\,.
\]	
By Lemma~\ref{lllemma}, there exist a compact set $\Gamma_H=\Gamma_H(\gamma,\varepsilon_1,\delta)$ of measure $> 1-\delta$ and positive numbers $\lambda\in(0,1),\rho,L$ and a finite collection of rectangles $R(x_1), \ldots, R(x_i),\ldots,R(x_\ell)$ with $x_i\in \Gamma_H$ and so that $B(x_i,\rho)\subset R(x_i)$ and $\diam R(x_i)<\varepsilon_1$ for every $i$, satisfying properties 0.--5. in the lemma.

By Lemma~\ref{lem:biiir}, there is a compact set $\Gamma_B=\Gamma_B(\delta,r,\phi)$ and a positive integer $n_B=n_B(\delta,r,\phi)$ such that $\mu(\Gamma_B)>1-\delta$ and for every $x\in \Gamma_B$ and every $n\ge n_B$ we  have
\begin{equation}\label{e:Birkhofff}
	\left\lvert \frac{1}{n}
		S_n\phi(x) - \int \phi\,d\mu\right\rvert \le r\,.
\end{equation}	

Besides the rectangles, let us fix a finite partition $\cP$ of $M$ of diameter $<\rho/2$. Notice that in this way for each $x_i$ the partition element $\cP(x_i)$ is completely contained in $R(x_i)$. 

Further, we want points in $\Gamma_J\cap\Gamma_H\cap \Gamma_B$, or at least most of them, to be closely recurring at the same, or at least at almost the same, time. Note that $\mu(\Gamma_J\cap\Gamma_H\cap\Gamma_B)>1-3\delta>0$.
By Lemma~\ref{lem:bur}, there is a compact set $\Gamma_R=\Gamma_R( \Gamma_J\cap\Gamma_H\cap \Gamma_B,\delta,\cP,r)$ of points with almost uniform recurrence times  satisfying $\mu(\Gamma_R)>1-\delta$ 
and a number $n_R=n_R( \Gamma_J\cap\Gamma_H\cap \Gamma_B,\delta,\cP,r)$ such that for every $n\ge n_R$ and every $x\in \Gamma_R$ for some $k\in\{n,\ldots,n+nr\}$ we have
\begin{equation}\label{rec}
	f^k(x)\in\cP(x)\cap \Gamma_J\cap\Gamma_H\cap \Gamma_B\, .
\end{equation} 
Consider the set 
\[
	\Gamma'\eqdef \Gamma_J\cap \Gamma_H\cap \Gamma_B\cap \Gamma_R,
\] 
and note that it  satisfies  $\mu(\Gamma')> 1-4\delta>0$. 

Further, by Lemma~\ref{l:recurrrr}, given $e\in(0,h_\mu(f)]$, for every $\varepsilon\in(0,\varepsilon_1)$ there exist a number $n_E=n_E(\delta,e,r,\varepsilon)$ and a set $\Gamma_E$ satisfying $\mu(\Gamma_E)> 1-\delta$ such that for every 
\begin{equation}\label{nchoi}
	n\ge
	\max\left\{n_J,n_H,n_B,n_R,n_E,\frac{\log\ell}{r},
			\frac{\lvert\log(1-5\delta)\rvert}{r}\right\}
\end{equation} 
 there is a $(n,\varepsilon)$-separated set $E\subset \Gamma'\cap \Gamma_E$ such that 
\begin{equation}\label{spaet-E}
	(1-5\delta)e^{n(e-r)}\le\card E\le e^{n(e+r)}\,,
  \end{equation}  
where we use the fact that $\mu(\Gamma'\cap \Gamma_E)\ge 1-5\delta>0$.  Let $\Gamma''\eqdef \Gamma'\cap \Gamma_E$.
  
For each $k$ with $n\le k< n+r n$ let
\[
E_k\eqdef \left\{x\in E\colon f ^{k}(x)\in\cP(x) \right\} 
\]
be the set of points in $E$ that have the same time $k$ of return to their
partition element.  Pick an index $m$  satisfying
\[
	\card E_m=\max_{n\le k<n+rn}\card E_k\,.
\]
Since $\card  E= \sum_{n\le k< n+r n}\card  E_k$, we have $rn\, {\rm  card}\,E_m\ge\card E$. With $rn <e^{nr}$ and~\eqref{spaet-E} we obtain
\[
	(1-5\delta)e^{n(e-2r)}
	\le \frac{\card E}{rn}\le \card E_m \le\card E\le  
	 e^{n(e+r)}.
\]
Pick now out of the $\ell$ rectangles from the rectangular cover the element $R(x_i)$, $x_i\in \Gamma_H$, for which $\card (E_m\cap \cP(x_i))$ is maximal. Hence we have 
\begin{equation}\label{docher}
	\frac{1}{\ell}(1-5\delta) e^{n(e-2r)}
	\le \card  (E_m\cap \cP(x_i))
	\le \frac{1 }{\ell} \card  E_m
	\le  \frac{1}{\ell} e^{n(e+r)}\,.
\end{equation}

Recall that exactly after $m$ iterations each point $x\in E_m\cap \cP(x_i)$ returns to $\cP(x_i)$, and hence to the rectangle $R(x_i)$, and that $f ^m(x)\in \Gamma_J\cap\Gamma_H\cap\Gamma_B$ (recall~\eqref{rec}). We have $x\in B(x_i,\rho/2)$ and $f^m(x)\in B(x_i,\rho/2)$. 
By items 3.--4. in Lemma~\ref{lllemma}, the connected components 
\[
\cC_x\left(R(x_i)\cap f^{-m}R(x_i)\right) 
\quad\text{and}\quad
f^m\left(\cC_x\left(R(x_i)\cap f^{-m}R(x_i)\right)\right)
\]
are an admissible $L^s$-rectangle and  $L^u$-rectangle in $R(x_i)$, respectively, for some number $L>0$. Analogously, one can show that the connected components 
\[
\cC_{f^m(x)}(R(x_i)\cap f^mR(x_i)) 
\quad\text{and}\quad
f^{-m}(\cC_{f^m(x)}(R(x_i)\cap f^mR(x_i)))
\]
are an admissible $L^u$-rectangle and  $L^s$-rectangle in $R(x_i)$, respectively. 
Note that item 4. in Lemma~\ref{lllemma} implies that any point $y\in \cC_x(R(x_i)\cap f^{-m}R(x_i))$ with $y\ne x$ satisfies 
\[
d(f^k(x),f^k(y))\le 3\diam R(x_i)\cdot
	\max\{e^{-k(\chi-\gamma)},e^{-(m-k)(\chi-\gamma)}\}
	<\varepsilon
\] 
for each $k=0, \ldots, m$, which implies that $y\notin  E_m\cap \cP(x_i)$ since $E_m$ is an $(n,\varepsilon)$-separated set and $x\in E_m\cap\cP(x_i)$. This implies that there are $\card  (E_m\cap \cP(x_i))$ disjoint admissible $L^s$-rectangles which are mapped under $f^m$ onto $\card  (E_m\cap \cP(x_i))$ admissible $L^u$-rectangles.
Let 
\[
\Gamma\eqdef 
\bigcap_{n\in\bZ}f^{nm}\left(\bigcup_{x\in  E_m\cap \cP(x_i)}
\cC_x\left(R(x_i)\cap f^{-m}R(x_i)\right) \right)\,.
\]
Consider the set $V=R(x_1)\cup\ldots\cup R(x_\ell)$ and the map $g=f^m$.
Observe that by construction we have
\[
	\Gamma\eqdef\bigcap_{k\in\bZ}g^k(V)\,,
\]
that is, $\Gamma$ is locally maximal with respect to $g$ and to the closed neighborhood $V$ built by rectangles.
By construction, $f^m|_\Gamma$ is topologically conjugate to a full two-sided shift in the symbolic space with $\card  (E_m\cap \cP(x_i))$ symbols. This shows claim i) of the theorem.

Since the rectangular cover was built around some sufficiently small neighborhood of points in the set $\Gamma_H$ which is a subset of the support of the measure $\mu$, claim ii) of the theorem follows.

Observe that every orbit in $\Gamma$ is modeled over the shift and the map $f^m$ sends a point in some rectangle $R(x_i)$ into a rectangle $R(x_j)$ by an orbit that stays $\varepsilon$-close to an orbit satisfying~\eqref{e:Birkhofff} for $n=m$. Consider the set
\[
	\widehat\Gamma\eqdef \Gamma\cup f(\Gamma)\cup\ldots\cup f^{m-1}(\Gamma)
\]
and observe that it is $f$-invariant.
Together with~\eqref{e:modcon} we obtain that for every $x\in\widehat\Gamma$ we have
\[
	\int\phi\,d\mu - r\le
	\liminf_{n\to\infty}\frac{1}{nm}S_{nm}\phi(x)\le
	\limsup_{n\to\infty}\frac{1}{nm}S_{nm}\phi(x)\le\int\phi\,d\mu + r\,.
\]
In particular, for every $f$-invariant probability measure $\nu$  supported on $\widehat\Gamma$ 
\begin{equation}\label{botafogo}
	\Big\lvert\int\phi\,d\nu-\int\phi\,d\mu\Big\rvert < r\,.
\end{equation}
Now recall that we concluded the above for all $\phi\in\{\psi_1,\ldots,\psi_K\}$. Hence, with~\eqref{distance},~\eqref{choicer00}, and~\eqref{choicer0} we obtain 
\[\begin{split}
	d(\nu,\mu)
	&\le 
	\sum_{j=1}^K2^{-j}\frac{r_0}{2\lVert\psi_j\rVert_\infty}
	+\sum_{j=K+1}^\infty 
		2^{-j}\frac{1}{2\lVert\psi_j\rVert_\infty}
		\,\Big\lvert \int\psi_j\,d\mu-\int\psi_j\,d\nu\Big\rvert\\
	&\le (1-2^{-K})\frac{r_0}{2}\max_{j=1,\ldots,K}\lVert\psi_j\rVert_\infty^{-1} 
		+ 2^{-K}2
	< r\,.
\end{split}\]
This proves claim iii) of the theorem.

Since $f^m|_\Gamma$ is conjugate to the full shift, its topological entropy is equal to $\log\card  (E_m\cap \cP(x_i))$ and hence the entropy of $f|_\Gamma$ satisfies
\[
h(f|_\Gamma)=\frac{1}{m}\log\,\card  (E_m\cap \cP(x_i))\,.
\]
Together with~\eqref{docher} we obtain
\[
\frac{1}{m}\log\,\frac{1}{\ell} -\frac1m\lvert\log(1-5\delta)\rvert
+ \frac{n}{m}(e-2r)
\le h(f|_\Gamma)
\le \frac{1}{m}\log\,\frac{1}{\ell} + \frac{n}{m}(e+r)\,.
\]
Recall that $m< n+rn$ and hence $n/m> 1-r/(1+r)$ and together with~\eqref{nchoi} 
\begin{equation}\label{eq:heutei}\begin{split}
h(f|_\Gamma)
&\ge e - 2r -  \frac{r}{1+r}(e-2r)-\frac1m\lvert\log(1-5\delta)\rvert- \frac{1}{m}\log\,\ell\\
&>  e -  r(2+e) -2r.
\end{split}\end{equation}
On the other hand
\begin{equation}\label{eq:heuteib}
	h(f|_\Gamma) \le \frac1m\log\frac1\ell +e+r<e+r\,.
\end{equation}
This proves that $h(f|_\Gamma)$ is arbitrarily close to $e$,  provided that $r$ has been chosen small enough.
This implies claim iv) of the theorem.

Let $\nu\in\cM(f|_{\widehat\Gamma})$ and $\varphi\colon M\to\bR$ continuous. Since we want to estimate $\int\varphi\,d\nu$, it suffices to assume $\varphi\in C^0(\overline U)$.
Then, by~\eqref{e:labsss} there exists some $\psi_i$ such that $\lVert \psi_i-\varphi\rVert_\infty<r$. Hence, together with~\eqref{botafogo}
\begin{equation}\label{humaita}\begin{split}
	\int\varphi\,d\nu &\ge \int\psi_i\,d\nu - r
	\ge \int\psi_i\,d\mu-2r
	\ge \int\varphi\,d\mu-3r\,.
\end{split}\end{equation}
Recall that the \emph{topological pressure of $\varphi$}  with respect to the compact $f$-invariant set $\widehat\Gamma$ 
satisfies the \emph{variational principle} (see~\cite{Wal:81})
\begin{equation}\label{eq:vprince}
	P(\varphi,f|_{\widehat\Gamma}) 
	= \sup_{\nu\in\cM(f|_{\widehat\Gamma})}\big(h_\nu(f)+\int\varphi\,d\nu\big)\,.	
\end{equation}
Thus,  together with~\eqref{humaita} and using~\eqref{eq:heutei}, we obtain
\[\begin{split}
	P(\varphi,f|_{\widehat\Gamma})
	&=\sup_{\nu\in\cM(f|_{\widehat\Gamma})}\big(h_\nu(f)+\int\varphi\,d\nu\big)
	\ge \sup_{\nu\in\cM(f|_{\widehat\Gamma})} h_\nu(f)+\int\varphi\,d\mu - 3r\\
	&= h(f|_{\widehat\Gamma})+\int\varphi\,d\mu - 3r
	\ge e +\int\varphi\,d\mu - r(7 + e) \,.
\end{split}\]
Analogously, using~\eqref{eq:heuteib} instead, we obtain
\[\begin{split}
	P(\varphi,f|_{\widehat\Gamma})
	&=\sup_{\nu\in\cM(f|_{\widehat\Gamma})}\big(h_\nu(f)+\int\varphi\,d\nu\big)
	\le h(f|_{\widehat\Gamma}) +\int\varphi\,d\mu + 3r\\
	&< e + \int\varphi\,d\mu + 4r\,.
\end{split}\]
This implies claim v) of the theorem.

Finally, by item 5. in Lemma~\ref{lllemma} and~\eqref{e:Jacobian} we conclude vi).
\end{proof}

\begin{proof}[Proof of Theorem~\ref{t:2}]
Let $e\in[0,h(f|_M))$.
By the variational principle, there exists an ergodic measure $\mu$ with $h_\mu(f)\in(e,h(f|_M)]$. Hence, by Theorem~\ref{t:1}, there exists $m\ge1$ and a basic set $\Gamma\subset M$ (with respect to $f^m$) satisfying $h(f|_\Gamma)=e+\zeta$ for some positive number $\zeta\in (0,h(f|_M)-e)$. The map $f^m|_\Gamma$ is topologically conjugate to a full two-sided shift. Then (see, for example~\cite[p. 178--179]{Wal:81})  for every $\gamma\in(0,e+\zeta)$ there exists a closed shift-invariant subset with topological entropy $\gamma$.
The claim is now an immediate consequence.
\end{proof}

\bibliographystyle{amsplain}

\end{document}